\newtheorem{thm}{Theorem}[section]
\newtheorem{lemma}[thm]{Lemma}
\newtheorem{cor}[thm]{Corollary}
\theoremstyle{remark}
\newtheorem{example}[thm]{Example}
\newtheorem{remark}[thm]{Remark}
\newtheorem{defin}{Definition}
\def\C{\mathbb{C}}
\def\R{\mathbb{R}}
\def\P{\mathbb{P}}
\def\d{\partial}
\def\a{\alpha}
\def\l{\lambda}
\def\D{\Delta}
\def\G{\Gamma}
\def\M{{\rm mitosis}}
\def\mM{{\rm{\underline{mitosis}}}}
\def\MA{M^A}
\def\MC{M^C}
\def\env{{\rm env}}
\title{Simple geometric mitosis}
\author{Valentina Kiritchenko}
\email{vkiritch@hse.ru}
\thanks{The study has been partially funded within the framework of the HSE University Basic Research Program.}
\address{Laboratory of Algebraic Geometry and Faculty of Mathematics\\
National Research University Higher School of Economics\\
Usacheva str. 6, 119048 Moscow, Russia}
\address{Institute for Information Transmission Problems, Moscow, Russia}
\date{}
\keywords{Schubert calculus, mitosis, push-pull operator}
\begin{document}
\begin{abstract}
We construct simple geometric operations on faces of the Cayley sum of two polytopes.
These operations can be thought of as convex geometric counterparts of divided difference operators in Schubert calculus.
We show that these operations give a uniform construction of Knutson--Miller mitosis (in type $A$) and (simplified) Fujita mitosis (in type $C$) on Kogan faces of Gelfand--Zetlin polytopes.
\end{abstract}

\maketitle

\section{Introduction}
Mitosis as a mathematical notion was introduced by Knutson and Miller to generate 
combinatorial objects (reduced pipe dreams or rc-graphs) from a single object \cite{KnM,M}.
Originally, pipe dreams were used in Fomin--Kirillov theorem to enumerate the coefficients of Schubert polynomials in type $A$.
Recall that Schubert polynomials are defined inductively by applying divided difference operators to a single Schubert polynomial. 
Hence, mitosis operations can be thought of as combinatorial counterparts of divided difference operators from Schubert calculus.
A convex geometric version of mitosis was defined in \cite{K16}: pipe dreams were replaced by faces of polytopes from representation theory (such as Nakashima--Zelevinsky polytopes) and divided difference operators were replaced by Demazure operators.
In the present paper, we introduce a different version of geometric mitosis (simple geometric mitosis).
It is more general and can be applied to classical Gelfand--Zetlin polytopes in all types.
The starting point for this paper was the desire to understand the results of \cite{F} from a geometric rather than representation theoretic viewpoint. 
  
Simple geometric mitosis is defined on a convex polytope $\D$ such that $\D$ can be represented as the Cayley sum of two of its facets.
Let $v$ be a vertex of $\Delta$.
Faces of $\Delta$ that contain $v$ play the role of pipe dreams.
Mitosis operation acts only on faces that contain $v$ and assigns to a face a collection of faces of dimension one greater.
In applications to Schubert calculus, there are several different mitosis operations associated with $\Delta$.
This is because Gelfand--Zetlin polytopes (and other polytopes from representation theory) admit several decompositions into Cayley sums of facets.
In this case, mitosis operations generate faces (that contain $v$) from the single vertex $v$.
The faces generated this way can be used to represent Schubert cycles as sums of faces.

The paper is organized as follows. 
In Section \ref{s.mitosis}, we give a definition of simple geometric mitosis.
In Section \ref{s.comb}, we recall the combinatorial definition of Knutson--Miller mitosis on pipe dreams in type $A$ and Fujita mitosis on skew pipe dreams in type $C$.
In Section \ref{s.Schubert}, we relate the constructions of the previous two sections using Gelfand--Zetlin polytopes in types $A$ and $C$.
In Section \ref{s.Schubert}, we outline applications to Schubert calculus.

\section{Main construction} \label{s.mitosis}
In this section, we give an elementary definition of simple geometric mitosis.
In Section \ref{s.ppo}, we explain the meaning of this definition in the setting of intersection theory.

Let $P$, $Q\subset \R^d$ be two convex polytopes of full dimension.
Denote by $\D:=\D(P,Q)\subset\R^d\times\R$ their Cayley sum $P*Q$, that is, the convex hull of
$$(P\times 0)\cup (Q\times 1).$$
In what follows, we identify $P$ and $Q$ with facets $P\times 0$ and $Q\times 1$ of $\D$.
The {\em mitosis operation} acts only on those faces of $\D$ that are contained in $P$ (``horizontal faces''), and produces faces of $\D$ that are not contained in $P$.

\begin{defin}\label{d.adm}
The face $F\subset P$ is called {\em admissible}, if there exists a unique face $\exp(F)\ne F$ with the property
$$F=\exp(F)\cap P.$$
\end{defin}
It is easy to check that if $F$ is admissible then $\dim\exp(F)=\dim F+1$.
The face $\exp(F)$ can be thought of as an expansion of $F$ from $P$ to $\Delta$.

\begin{example}
Let $d=2$, and let $P, Q\subset \R^2$ be two triangles obtained from a unit square by cutting along a diagonal.
More precisely, take $P=\{(x_1,x_2)\in\R^2 \ | \ x_1,x_2 \le 1, \ x_1+x_2\ge 1\}$
and $Q=\{(x_1,x_2)\in\R^2 \ | \ x_1,x_2 \ge 0, \ x_1+x_2\le 1\}$.
Then $\Delta=\{(x_1,x_2,x_3)\in\R^3 \ | \ x_1,x_2,x_3\le 1, \ 1\le x_1+x_2+x_3 \le 2\}$.
The face $P\subset\D$ and edges of $P$ are  admissible faces, while vertices of $P$ are not.
\end{example}

Let $v\in P$ be a vertex.
In the definition below, we consider only those faces of $P$ and $\D$ that contain $v$.
We call them {\em $v$-faces}.
In particular, the {\em mitosis operation} $\M^v(\cdot)$ will depend on the choice of $v$.
Let $F\subset P$ be an admissible $v$-face of dimension $\ell$.
We will define $\M^v(F)$ geometrically as a set of $v$-faces  $\{E_1,\ldots,E_k\}$ of dimension $\ell+1$.

\begin{defin}\label{d.mitosis}
A $v$-face $E_i\subset \Delta$ belongs to $\M^v(F)$ if $E_i$ satisfies the following two conditions:
\begin{enumerate}
\item $E_i\not\subset P$, $Q$;
\item $E_i\cap Q\subset \exp(F)\cap Q$.
\end{enumerate}
Faces in $\M^v(F)$ will be called {\em offsprings} of $F$.
\end{defin}
Informally, the first condition means that $E_i$ is not a ``horizontal'' face, that is, $E_i$ intersects both $P$ and $Q$.
In particular, it is not possible to apply the same mitosis operation twice because none of the faces $E_i\in\M^v(F)$ lies in $P$.
The second condition tells us that the face $E_i$ is not in general position with respect to $Q$ (unless $E_i=\exp(F)$).

\begin{example} \label{e.FFLV} Let $\Delta\subset\R^3$ be a Feigin--Fourier--Littelmann--Vinberg (FFLV) polytope given by inequalities $0\le x_1, x_3\le 1$, $0\le x_2$,  $x_1+x_2+x_3\le 2$.
Consider faces:
$$P_1=\{x_3=0\}, \quad Q_1=\{x_3=1\}, \quad P_2=\{x_1=1\}, \quad Q_2=\{x_1=0\}.$$
Let $v$ be the vertex $(1,1,0)$.
Clearly, there are two ways to decompose $\Delta$ as the Cayley sum of two polygons: $\Delta=\Delta(P_1,Q_1)$ and $\Delta=\Delta(P_2,Q_2)$.
Hence, there are two different mitosis operations $\M^v_1$ and $\M^v_2$ associated with these decompositions.

For instance, $\M^v_1(F)$ for the edge $F=\{x_3=0, \ x_1+x_2+x_3=2\}$ consists of two offsprings, namely, $\exp(F)=\{x_1+x_2+x_3=2\}$ and
$P_2$.
However, $\M^v_2(F)$ consists of a single offspring for all admissible $v$-faces $F\subset P_2$.
\end{example}
\begin{remark}
In what follows, we sometimes apply the mitosis operation to a set $S$ of faces of the same dimension.
By $\M^v(S)$ we mean
$\bigcup_{F\in S}\M^v(F).$
\end{remark}
\section{Combinatorial mitosis}\label{s.comb}
In this section, we recall two combinatorial rules: Knutson--Miller mitosis on pipe dreams (type $A$) and Fujita mitosis on skew pipe dreams (type $C$).
Both rules can be defined uniformly using the same combinatorial operation called {\em two-row mitosis}.
A similar operation is implicitly used in the original definition of Knutson--Miller mitosis \cite{KnM, M}.
We first define {\em two-row mitosis} explicitly, and then define mitosis operations in types $A$ and $C$ by reducing them to suitable two-row mitosis.
Note that the term {\em row} here does not necessarily mean a horizontal collection of items as in the original definition of Knutson--Miller mitosis.

\subsection{Two-row mitosis}
Let $A$ and $B$ be two finite collections of squares such that the number of squares in $A$ is greater by one than the number of squares in $B$.
Denote the number of squares in $B$ by $\ell$.
We will label the squares in $A$ by $a1$, $a2$,\ldots, $a(\ell+1)$, and the squares in $B$ by $b1$, $b2$,\ldots, $b\ell$ (like in chess notation).
Symbolically, we may represent $A$ and $B$ by $(\ell+1)$-row and $\ell$-row of squares, respectively.
In real life examples, we might need to arrange squares of $A$ and $B$ in more intricate ways.
Namely, we will identify squares in $A$ and $B$ with specific cells in various tables.

To get a {\em basic pipe dream} we fill some squares in $A$ and $B$ with $+$.
The other squares remain empty.
By {\em size} of a basic pipe dream $D$ we mean the total number of $+$ in the squares of $D$.
{\em Two-row mitosis} $M$ is an operation on basic pipe dreams that sends a basic pipe dream $D$ of size $s$ to a (possibly empty) set  $M(D)$ of basic pipe dreams of size $(s-1)$.

To construct the set $M(D)$ we use the following rules:
\begin{enumerate}
\item If square $a1$ is empty, then $M(D)=\varnothing$.
\item If square $a1$ contains $+$, then find the maximal index $r_D$ such that the squares $a1$,\ldots, $ar_D$ are all filled with $+$.
\item If square $a1$ contains $+$, define the set $\mathcal J(D)$ of indices:
$$\mathcal J(D):=\{j\le r_D \ |\ \mbox{ square } aj \mbox{ contains } +, \mbox{ square } bj \mbox{ is empty or } j=\ell+1\}.$$
\item For every $p\in \mathcal J(D)$, construct the offspring $D_p$ as follows.
First, erase $+$ in square $ap$.
Then move $+$ from square $aj$ down to square $bj$ for all $j\in \mathcal J(D)$ such that $j<p$.
\end{enumerate}

\begin{example}{(cf. \cite[Example 7]{M})}
\label{e.2mitosis}
Let $\ell=5$.
Figure \ref{f.2row} shows a pipe dream $D$ of size $6$ and three pipe dreams of size $5$ that form the set $M(D)$.
In this case, $\mathcal J(D)=\{1,2,4\}$, and
$r_D=4$.
\begin{figure}[h]
		\centering
		\begin{tikzpicture}[x=1em,y=-1em]
			\draw (0,0) rectangle +(1,1);
			\draw (1,0) rectangle +(1,1);
			\draw (2,0) rectangle +(1,1);
			\draw (3,0) rectangle +(1,1);
            \draw (4,0) rectangle +(1,1);
            \draw (5,0) rectangle +(1,1);
			\draw (0,1) rectangle +(1,1);
			\draw (1,1) rectangle +(1,1);
			\draw (2,1) rectangle +(1,1);
            \draw (3,1) rectangle +(1,1);
            \draw (4,1) rectangle +(1,1);
			\draw (0.5,0.5) node{$+$};
			\draw (1.5,0.5) node{$+$};
			\draw (2.5,0.5) node{$+$};
            \draw (3.5,0.5) node{$+$};
            \draw (2.5,1.5) node{$+$};
		\end{tikzpicture}
$$\downarrow$$
\begin{tikzpicture}[x=1em,y=-1em]
			\draw (0,0) rectangle +(1,1);
			\draw (1,0) rectangle +(1,1);
			\draw (2,0) rectangle +(1,1);
			\draw (3,0) rectangle +(1,1);
            \draw (4,0) rectangle +(1,1);
            \draw (5,0) rectangle +(1,1);
			\draw (0,1) rectangle +(1,1);
			\draw (1,1) rectangle +(1,1);
			\draw (2,1) rectangle +(1,1);
            \draw (3,1) rectangle +(1,1);
            \draw (4,1) rectangle +(1,1);
			\draw (1.5,0.5) node{$+$};
			\draw (2.5,0.5) node{$+$};
            \draw (3.5,0.5) node{$+$};
            \draw (2.5,1.5) node{$+$};
		\end{tikzpicture}
\begin{tikzpicture}[x=1em,y=-1em]
			\draw (0,0) rectangle +(1,1);
			\draw (1,0) rectangle +(1,1);
			\draw (2,0) rectangle +(1,1);
			\draw (3,0) rectangle +(1,1);
            \draw (4,0) rectangle +(1,1);
            \draw (5,0) rectangle +(1,1);
			\draw (0,1) rectangle +(1,1);
			\draw (1,1) rectangle +(1,1);
			\draw (2,1) rectangle +(1,1);
            \draw (3,1) rectangle +(1,1);
            \draw (4,1) rectangle +(1,1);
			\draw (2.5,0.5) node{$+$};
            \draw (3.5,0.5) node{$+$};
            \draw (0.5,1.5) node{$+$};
            \draw (2.5,1.5) node{$+$};
		\end{tikzpicture}
\begin{tikzpicture}[x=1em,y=-1em]
			\draw (0,0) rectangle +(1,1);
			\draw (1,0) rectangle +(1,1);
			\draw (2,0) rectangle +(1,1);
			\draw (3,0) rectangle +(1,1);
            \draw (4,0) rectangle +(1,1);
            \draw (5,0) rectangle +(1,1);
			\draw (0,1) rectangle +(1,1);
			\draw (1,1) rectangle +(1,1);
			\draw (2,1) rectangle +(1,1);
            \draw (3,1) rectangle +(1,1);
            \draw (4,1) rectangle +(1,1);
			\draw (2.5,0.5) node{$+$};
            \draw (0.5,1.5) node{$+$};
            \draw (1.5,1.5) node{$+$};
            \draw (2.5,1.5) node{$+$};
		\end{tikzpicture}
		\caption{}
		\label{f.2row}
	\end{figure}
\end{example}

\subsection{Mitosis in type A}
Recall that a pipe dream in type $A_n$ can be defined as an $n\times n$ table whose cells are either filled with $+$ or empty.
Furthermore, $+$ is not allowed in cell $(i,j)$ if the cell is below the main antidiagonal of the table (that is, $i+j>n+1$).
An example of pipe dream for $n=6$ is given on Figure \ref{f.pipe_dream} (left).
Pipe dreams have an elegant interpretation in terms of networks of pipes, and are related to permutations from $S_{n+1}$ (see \cite{M} for details).
\begin{figure}[h]
		\centering
		\begin{tikzpicture}[x=1em,y=-1em]
			\draw (0,0) rectangle +(1,1);
			\draw (1,0) rectangle +(1,1);
			\draw (2,0) rectangle +(1,1);
			\draw (3,0) rectangle +(1,1);
            \draw (4,0) rectangle +(1,1);
            \draw (5,0) rectangle +(1,1);
			\draw (0,1) rectangle +(1,1);
			\draw (1,1) rectangle +(1,1);
			\draw (2,1) rectangle +(1,1);
            \draw (3,1) rectangle +(1,1);
            \draw (4,1) rectangle +(1,1);
			\draw (0,2) rectangle +(1,1);
			\draw (1,2) rectangle +(1,1);
            \draw (2,2) rectangle +(1,1);
            \draw (3,2) rectangle +(1,1);
            \draw (0,3) rectangle +(1,1);
			\draw (1,3) rectangle +(1,1);
            \draw (2,3) rectangle +(1,1);
            \draw (0,4) rectangle +(1,1);
            \draw (1,4) rectangle +(1,1);
            \draw (0,5) rectangle +(1,1);
            \draw (0.5,0.5) node{$+$};
			\draw (1.5,0.5) node{$+$};
			\draw (2.5,0.5) node{$+$};
            \draw (3.5,0.5) node{$+$};
            \draw (2.5,1.5) node{$+$};
            \draw (0.5,2.5) node{$+$};
            \draw (0.5,3.5) node{$+$};
            \draw (0.5,4.5) node{$+$};
		\end{tikzpicture} \qquad
\begin{tikzpicture}[x=1em,y=-1em]
			\draw (0,0) rectangle +(1,1);
			\draw (1,0) rectangle +(1,1);
			\draw (2,0) rectangle +(1,1);
			\draw (3,0) rectangle +(1,1);
            \draw (4,0) rectangle +(1,1);
            \draw (5,0) rectangle +(1,1);
			\draw (6,0) rectangle +(1,1);
			\draw (1,1) rectangle +(1,1);
            \draw (2,1) rectangle +(1,1);
            \draw (3,1) rectangle +(1,1);
			\draw (4,1) rectangle +(1,1);
			\draw (5,1) rectangle +(1,1);
            \draw (2,2) rectangle +(1,1);
            \draw (3,2) rectangle +(1,1);
            \draw (4,2) rectangle +(1,1);
			\draw (3,3) rectangle +(1,1);
            \draw (3.5,0.5) node{$+$};
			\draw (3.5,1.5) node{$+$};
			\draw (3.5,2.5) node{$+$};
            \draw (3.5,3.5) node{$+$};
			\draw (4.5,2.5) node{$+$};
            \draw (2.5,1.5) node{$+$};
            \draw (2.5,2.5) node{$+$};
            \draw (2.5,0.5) node{$+$};
            \draw (0.5,0.5) node{$+$};
		\end{tikzpicture}
		\caption{Pipe dreams in type $A_6$ (left) and $C_4$ (right).}
		\label{f.pipe_dream}
	\end{figure}
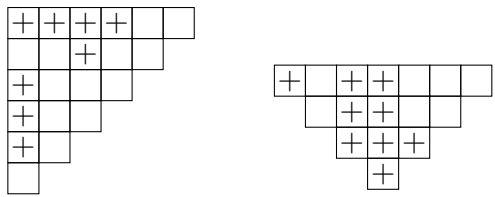

There are $n$ different mitosis operations $\MA_1$,\ldots, $\MA_n$ on pipe dreams in type $A_n$.
Informally, mitosis operation $\MA_i$ can be defined as the two-row mitosis applied to rows $i$ and $i+1$ of a pipe dream (the other rows are not affected by $\MA_i$).
Note that mitosis operation $\MA_n$ is also well-defined though row $n+1$ is an empty set of boxes.
This is because row $n$ might have at most one $+$, that is, no $+$ will have to be moved down according to the mitosis rules.

We now define mitosis operation $\MA_i$ more formally.
Let $D$ be a pipe dream in type $A_n$.
Put $\ell=n-i$, and label cells $(i,1)$,\ldots, $(i,n-i+1)$ by $a1$,\ldots, $a(\ell+1)$, respectively.
Label cells $(i+1,1)$,\ldots, $(i+1,n-i)$ by $b1$,\ldots, $b\ell$, respectively.
Extract the basic pipe dream $D^i$ from $D$ by setting $A^i=(a1,\ldots, a(\ell+1))$ and $B^i=(b1,\ldots, b\ell)$.
Apply two-row mitosis to $D^i$.
Complete every resulting offspring $D^i_p$ to a pipe dream $D_p$ by replacing $D^i$ with $D^i_p$ in $D$.
Define $\MA_i(D)$ as the set $\{D_p \ | \ p\in \mathcal J(D^i)\}$.

\begin{example} Let $D$ be a pipe dream depicted on Figure \ref{f.pipe_dream} (left).
For $i=1$, the basic pipe dream $D^1$ coincides with the one on Figure \ref{f.2row} (top) from Example \ref{e.2mitosis}.
Hence, $\MA_1(D)$ consists of the pipe dreams shown on Figure \ref{f.mitosisA}.
\begin{figure}[h]
		\centering
		\begin{tikzpicture}[x=1em,y=-1em]
			\draw (0,0) rectangle +(1,1);
			\draw (1,0) rectangle +(1,1);
			\draw (2,0) rectangle +(1,1);
			\draw (3,0) rectangle +(1,1);
            \draw (4,0) rectangle +(1,1);
            \draw (5,0) rectangle +(1,1);
			\draw (0,1) rectangle +(1,1);
			\draw (1,1) rectangle +(1,1);
			\draw (2,1) rectangle +(1,1);
            \draw (3,1) rectangle +(1,1);
            \draw (4,1) rectangle +(1,1);
			\draw (0,2) rectangle +(1,1);
			\draw (1,2) rectangle +(1,1);
            \draw (2,2) rectangle +(1,1);
            \draw (3,2) rectangle +(1,1);
            \draw (0,3) rectangle +(1,1);
			\draw (1,3) rectangle +(1,1);
            \draw (2,3) rectangle +(1,1);
            \draw (0,4) rectangle +(1,1);
            \draw (1,4) rectangle +(1,1);
            \draw (0,5) rectangle +(1,1);
            \draw (1.5,0.5) node{$+$};
			\draw (2.5,0.5) node{$+$};
            \draw (3.5,0.5) node{$+$};
            \draw (2.5,1.5) node{$+$};
            \draw (0.5,2.5) node{$+$};
            \draw (0.5,3.5) node{$+$};
            \draw (0.5,4.5) node{$+$};
		\end{tikzpicture} \quad
\begin{tikzpicture}[x=1em,y=-1em]
			\draw (0,0) rectangle +(1,1);
			\draw (1,0) rectangle +(1,1);
			\draw (2,0) rectangle +(1,1);
			\draw (3,0) rectangle +(1,1);
            \draw (4,0) rectangle +(1,1);
            \draw (5,0) rectangle +(1,1);
			\draw (0,1) rectangle +(1,1);
			\draw (1,1) rectangle +(1,1);
			\draw (2,1) rectangle +(1,1);
            \draw (3,1) rectangle +(1,1);
            \draw (4,1) rectangle +(1,1);
			\draw (0,2) rectangle +(1,1);
			\draw (1,2) rectangle +(1,1);
            \draw (2,2) rectangle +(1,1);
            \draw (3,2) rectangle +(1,1);
            \draw (0,3) rectangle +(1,1);
			\draw (1,3) rectangle +(1,1);
            \draw (2,3) rectangle +(1,1);
            \draw (0,4) rectangle +(1,1);
            \draw (1,4) rectangle +(1,1);
            \draw (0,5) rectangle +(1,1);
            \draw (2.5,0.5) node{$+$};
            \draw (3.5,0.5) node{$+$};
            \draw (0.5,1.5) node{$+$};
            \draw (2.5,1.5) node{$+$};
            \draw (0.5,2.5) node{$+$};
            \draw (0.5,3.5) node{$+$};
            \draw (0.5,4.5) node{$+$};
		\end{tikzpicture} \quad
        \begin{tikzpicture}[x=1em,y=-1em]
			\draw (0,0) rectangle +(1,1);
			\draw (1,0) rectangle +(1,1);
			\draw (2,0) rectangle +(1,1);
			\draw (3,0) rectangle +(1,1);
            \draw (4,0) rectangle +(1,1);
            \draw (5,0) rectangle +(1,1);
			\draw (0,1) rectangle +(1,1);
			\draw (1,1) rectangle +(1,1);
			\draw (2,1) rectangle +(1,1);
            \draw (3,1) rectangle +(1,1);
            \draw (4,1) rectangle +(1,1);
			\draw (0,2) rectangle +(1,1);
			\draw (1,2) rectangle +(1,1);
            \draw (2,2) rectangle +(1,1);
            \draw (3,2) rectangle +(1,1);
            \draw (0,3) rectangle +(1,1);
			\draw (1,3) rectangle +(1,1);
            \draw (2,3) rectangle +(1,1);
            \draw (0,4) rectangle +(1,1);
            \draw (1,4) rectangle +(1,1);
            \draw (0,5) rectangle +(1,1);
            \draw (2.5,0.5) node{$+$};
            \draw (0.5,1.5) node{$+$};
            \draw (1.5,1.5) node{$+$};
            \draw (2.5,1.5) node{$+$};
            \draw (0.5,2.5) node{$+$};
            \draw (0.5,3.5) node{$+$};
            \draw (0.5,4.5) node{$+$};
		\end{tikzpicture}
		\caption{}	
\label{f.mitosisA}	
	\end{figure}
If $i=2,3,4,6$, then $\MA_i(D)$ is empty.
Finally, $\MA_5(D)$ consists of a single offspring obtained from $D$ by erasing the $+$ in the fifth row.
\end{example}
\begin{remark} \label{r.A} It is easy to check that $\MA_i$ coincides with the $i$-th mitosis operator ${\rm mitosis}_i$ introduced in \cite[Definition 6]{M} (Knutson--Miller mitosis).
There is also a strong relationship between $\MA_i$ and the operator $M_i$ introduced in \cite[Section 5]{F} using representation theoretic considerations.
These operations coincide whenever $D$ does not have any $+$ in cells to the right of $r_{D_i}$.
While $\MA_i$ and $M_i$ might differ on the other pipe dreams (the latter operation might produce more offsprings than the former) it is interesting that both operations lead to the same results in Schubert calculus (see \cite[Corollary 5.13]{F} and the preceding discussion for more details).
In particular, \cite[Theorem 15]{M} still holds if ${\rm mitosis}_i$ is replaced by $M_i$, that is, $M_i$ can be viewed as an alternative version of the Knutson--Miller mitosis.
\end{remark}
\subsection{Mitosis in type $C$}
Similarly to type $A$ case, pipe dreams in type $C_n$ (also called {\em skew pipe dreams}) can be defined as $n\times(2n-1)$ tables filled with $+$.
In type $C_n$ case, $+$ is not allowed in cell $(i,j)$ if $i+j> 2n$ or $i>j$.
An example of a skew pipe dream for $n=4$ is given on Figure \ref{f.pipe_dream} (right).
Skew pipe dreams were recently used in \cite{F} to construct a convex geometric model for Schubert calculus in type $C$ in terms of symplectic GZ polytopes (we say more about this in Section \ref{s.Schubert}), in particular, they are related to signed permutation.
It would be interesting to find an interpretation of skew pipe dreams in terms of  networks of pipes.
In \cite{ST}, {\em c-signed pipe dreams} were defined  as networks of pipes with extra features.
We do not know of any direct relation between skew pipe dreams and c-signed pipe dreams.

We now define $n$ mitosis operations $\MC_1$,\ldots, $\MC_n$ on a skew pipe dream $D$.
The case $i=1$ is special.
In this case, set $\ell=n-1$.
Label cells $(1,n)$, $(2,n)$,\ldots, $(n,n)$ by $a1$, $a2$,\ldots, $a(\ell+1)$, respectively (these are cells in the middle column of $D$).
Label cells $(1,n+1)$, $(2,n+1)$,\ldots, $(n-1,n+1)$ by $b1$, $b2$,\ldots, $b\ell$, respectively.
If $i=2$,\ldots, $n$, set $\ell=2(n-i)+1$.
Label cells $(1,n-i+1)$, $(1,n+i-1)$, $(2,n-i+1)$, $(2,n+i+1)$,\ldots , $(n-i+1,n-i+1)$, $(n-i+1,n+i-1)$ by $a1$, $a2$, $a3$, $a4$,\ldots, $a\ell$, $a(\ell+1)$, respectively (these are all cells in columns $n\pm(i-1)$ of $D$).
Label cells $(1,n-i+2)$, $(1,n+i)$, $(2,n-i+2)$, $(2,n+i)$,\ldots , $(n-i+1,n-i+2)$  by $b1$, $b2$, $b3$, $b4$,\ldots, $b\ell$, respectively (here we alternate first $n-i+1$ cells in column $n-i+2$ with cells in column $n+i$ of $D$).
The rest of the definition of $\MC_i$ is completely analogous to type $A$ case.
\begin{example}
Let $D$ be a skew pipe dream from Figure \ref{f.pipe_dream} (right).
Using Example \ref{e.2mitosis} again we get that $\MC_1(D)$ consists of skew pipe dreams depicted on Figure \ref{f.mitosisC}.
\begin{figure}[h]
		\centering
    \begin{tikzpicture}[x=1em,y=-1em]
			\draw (0,0) rectangle +(1,1);
			\draw (1,0) rectangle +(1,1);
			\draw (2,0) rectangle +(1,1);
			\draw (3,0) rectangle +(1,1);
            \draw (4,0) rectangle +(1,1);
            \draw (5,0) rectangle +(1,1);
			\draw (6,0) rectangle +(1,1);
			\draw (1,1) rectangle +(1,1);
            \draw (2,1) rectangle +(1,1);
            \draw (3,1) rectangle +(1,1);
			\draw (4,1) rectangle +(1,1);
			\draw (5,1) rectangle +(1,1);
            \draw (2,2) rectangle +(1,1);
            \draw (3,2) rectangle +(1,1);
            \draw (4,2) rectangle +(1,1);
			\draw (3,3) rectangle +(1,1);
			\draw (3.5,1.5) node{$+$};
			\draw (3.5,2.5) node{$+$};
            \draw (3.5,3.5) node{$+$};
			\draw (4.5,2.5) node{$+$};
            \draw (2.5,1.5) node{$+$};
            \draw (2.5,2.5) node{$+$};
            \draw (2.5,0.5) node{$+$};
            \draw (0.5,0.5) node{$+$};
		\end{tikzpicture}\quad
        \begin{tikzpicture}[x=1em,y=-1em]
			\draw (0,0) rectangle +(1,1);
			\draw (1,0) rectangle +(1,1);
			\draw (2,0) rectangle +(1,1);
			\draw (3,0) rectangle +(1,1);
            \draw (4,0) rectangle +(1,1);
            \draw (5,0) rectangle +(1,1);
			\draw (6,0) rectangle +(1,1);
			\draw (1,1) rectangle +(1,1);
            \draw (2,1) rectangle +(1,1);
            \draw (3,1) rectangle +(1,1);
			\draw (4,1) rectangle +(1,1);
			\draw (5,1) rectangle +(1,1);
            \draw (2,2) rectangle +(1,1);
            \draw (3,2) rectangle +(1,1);
            \draw (4,2) rectangle +(1,1);
			\draw (3,3) rectangle +(1,1);
            \draw (4.5,0.5) node{$+$};
			\draw (3.5,2.5) node{$+$};
            \draw (3.5,3.5) node{$+$};
			\draw (4.5,2.5) node{$+$};
            \draw (2.5,1.5) node{$+$};
            \draw (2.5,2.5) node{$+$};
            \draw (2.5,0.5) node{$+$};
            \draw (0.5,0.5) node{$+$};
		\end{tikzpicture}\quad
        \begin{tikzpicture}[x=1em,y=-1em]
			\draw (0,0) rectangle +(1,1);
			\draw (1,0) rectangle +(1,1);
			\draw (2,0) rectangle +(1,1);
			\draw (3,0) rectangle +(1,1);
            \draw (4,0) rectangle +(1,1);
            \draw (5,0) rectangle +(1,1);
			\draw (6,0) rectangle +(1,1);
			\draw (1,1) rectangle +(1,1);
            \draw (2,1) rectangle +(1,1);
            \draw (3,1) rectangle +(1,1);
			\draw (4,1) rectangle +(1,1);
			\draw (5,1) rectangle +(1,1);
            \draw (2,2) rectangle +(1,1);
            \draw (3,2) rectangle +(1,1);
            \draw (4,2) rectangle +(1,1);
			\draw (3,3) rectangle +(1,1);
            \draw (4.5,0.5) node{$+$};
			\draw (4.5,1.5) node{$+$};
			\draw (3.5,2.5) node{$+$};
			\draw (4.5,2.5) node{$+$};
            \draw (2.5,1.5) node{$+$};
            \draw (2.5,2.5) node{$+$};
            \draw (2.5,0.5) node{$+$};
            \draw (0.5,0.5) node{$+$};
		\end{tikzpicture}
		\caption{}
		\label{f.mitosisC}
	\end{figure}
For $i=2,3$, the set $\MC_i(D)$ is empty.
The set $\MC_4(D)$ consists of a single offspring obtained by erasing the $+$ in the first column of $D$.
\end{example}

\begin{remark}\label{r.mitosisC}
Mitosis operation $\MC_i$ on skew pipe dreams is different from operator $M_i$ defined in \cite[Section 6]{F}.
As in type $A$ (see Remark \ref{r.A}), the difference lies in the restriction $j\le r_D$ in Definition \ref{d.mitosis}.
It would be interesting to check whether both operations still produce the same results in applications to Schubert calculus.
\end{remark}

\section{Mitosis on GZ polytopes in types A and C}\label{s.Schubert}
In this section, we apply simple geometric mitosis to GZ polytopes in types $A$ and $C$.
We compare the resulting operations with geometric realizations of Knutson--Miller mitosis and Fujita mitosis.

\subsection{Gelfand--Zetlin polytopes in type $A_n$}
Put $d:=\frac{n(n+1)}{2}$.
A {\em GZ table of type $A_n$} is a collection of cells organized according to the pattern on Figure \ref{f.patterns} (left).
Let $\l:=(\l_1\ge\l_2\ge\ldots\ge\l_{n+1})$ be a decreasing collection of real numbers.
Identify a point $(x^1_1,x^1_2,\ldots, x^1_n; x^2_1,\ldots,x^2_{n-1};\ldots; x^{n-1}_1, x^{n-1}_2; x^{n}_1)\in\R^d$ with the GZ table whose $i$-th row is filled with coordinates $(x^i_1,x^i_2,\ldots,x^i_{n-i+1})$ for $1\le i\le n$.
Define the GZ polytope $GZ^A_\l\subset\R^d$ by $2d$ interlacing inequalities $x^{i-1}_j\ge x^i_j\ge x^{i-1}_{j+1}$ (we put $x^0_j:=\l_j$).
In terms of the GZ table, these inequalities tell us that the coordinate in any given cell lies between coordinates in two upper neighbors of this cell.
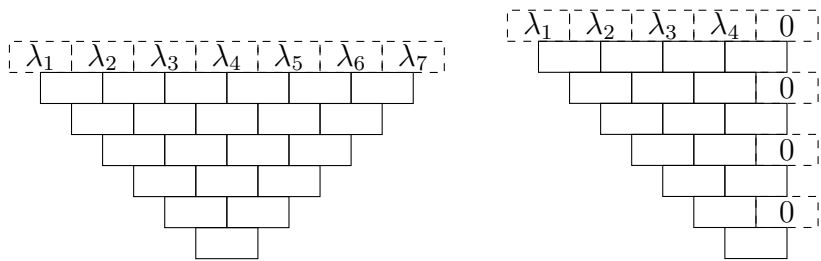
\begin{figure}[h]
		\centering
		\begin{tikzpicture}[x=2em,y=-1em]
			\draw[dashed] (-0.5,-1) rectangle +(1,1);
            \draw[dashed] (0.5,-1) rectangle +(1,1);
            \draw[dashed] (1.5,-1) rectangle +(1,1);
            \draw[dashed] (2.5,-1) rectangle +(1,1);
            \draw[dashed] (3.5,-1) rectangle +(1,1);
            \draw[dashed] (4.5,-1) rectangle +(1,1);
            \draw[dashed] (5.5,-1) rectangle +(1,1);
            \draw (0,0) rectangle +(1,1);
			\draw (1,0) rectangle +(1,1);
			\draw (2,0) rectangle +(1,1);
			\draw (3,0) rectangle +(1,1);
            \draw (4,0) rectangle +(1,1);
            \draw (5,0) rectangle +(1,1);
			\draw (0.5,1) rectangle +(1,1);
			\draw (1.5,1) rectangle +(1,1);
			\draw (2.5,1) rectangle +(1,1);
            \draw (3.5,1) rectangle +(1,1);
            \draw (4.5,1) rectangle +(1,1);
			\draw (1,2) rectangle +(1,1);
			\draw (2,2) rectangle +(1,1);
            \draw (3,2) rectangle +(1,1);
            \draw (4,2) rectangle +(1,1);
            \draw (1.5,3) rectangle +(1,1);
			\draw (2.5,3) rectangle +(1,1);
            \draw (3.5,3) rectangle +(1,1);
            \draw (2,4) rectangle +(1,1);
            \draw (3,4) rectangle +(1,1);
            \draw (2.5,5) rectangle +(1,1);
			\draw (0,-0.5) node{$\l_1$};
			\draw (1,-0.5) node{$\l_2$};
            \draw (2,-0.5) node{$\l_3$};
            \draw (3,-0.5) node{$\l_4$};
            \draw (4,-0.5) node{$\l_5$};
            \draw (5,-0.5) node{$\l_6$};
            \draw (6,-0.5) node{$\l_7$};
		\end{tikzpicture}\qquad
		\begin{tikzpicture}[x=2em,y=-1em]
            \draw[dashed] (-0.5,-1) rectangle +(1,1);
            \draw[dashed] (0.5,-1) rectangle +(1,1);
            \draw[dashed] (1.5,-1) rectangle +(1,1);
            \draw[dashed] (2.5,-1) rectangle +(1,1);
            \draw[dashed] (3.5,-1) rectangle +(1,1);
            \draw[dashed] (3.5,1) rectangle +(1,1);
            \draw[dashed] (3.5,3) rectangle +(1,1);
            \draw[dashed] (3.5,5) rectangle +(1,1);
			\draw (0,0) rectangle +(1,1);
			\draw (1,0) rectangle +(1,1);
			\draw (2,0) rectangle +(1,1);
			\draw (3,0) rectangle +(1,1);
			\draw (0.5,1) rectangle +(1,1);
			\draw (1.5,1) rectangle +(1,1);
			\draw (2.5,1) rectangle +(1,1);
			\draw (1,2) rectangle +(1,1);
			\draw (2,2) rectangle +(1,1);
            \draw (3,2) rectangle +(1,1);
			\draw (1.5,3) rectangle +(1,1);
            \draw (2.5,3) rectangle +(1,1);
            \draw (2,4) rectangle +(1,1);
            \draw (3,4) rectangle +(1,1);
            \draw (2.5,5) rectangle +(1,1);
            \draw (3,6) rectangle +(1,1);
			\draw (0,-0.5) node{$\l_1$};
			\draw (1,-0.5) node{$\l_2$};
			\draw (2,-0.5) node{$\l_3$};
			\draw (3,-0.5) node{$\l_4$};
            \draw (4,-0.5) node{$0$};
            \draw (4,1.5) node{$0$};
            \draw (4,3.5) node{$0$};
            \draw (4,5.5) node{$0$};
		\end{tikzpicture}
		\caption{Gelfand--Zetlin patterns in type $A_6$ (left) and type $C_4$ (right).}
		\label{f.patterns}
	\end{figure}

Let $v$ be the {\em Kogan vertex} of $GZ_\l^A$.
Recall that the {\em Kogan vertex} is a unique vertex of the polytope $GZ^A_\l$ that satisfies simultaneously all $d$ equations of type $x^{i-1}_j= x^i_j$.
A face $\G\subset GZ^A_\l$ is called a {\em Kogan face} if $\G$ contains $v$.
These faces were first considered by Mikhail Kogan \cite{Ko}.
In particular, $v$-faces used in the definition of mitosis in Section \ref{s.mitosis} are exactly Kogan faces for this choice of $v$.

Note that the polytope $GZ^A_\l$ for $\l=(n,n-1,\ldots,1,0)$ can be represented as the Cayley sum of two polytopes in $n$ differents ways.
Namely, let $P_i$ and $Q_i$ be the facets of $GZ^A_\l$ given by equations $x^1_i=\l_i$ and $x^1_i=\l_{i+1}$, respectively.
Then $\D(P_i,Q_i)$ coincides with $GZ^A_\l$ up to a parallel translation.
Clearly, $v\in P_1,$\ldots, $P_n$.
Hence, we can define $n$ different mitosis operations $\M^v_1$,\ldots, $\M^v_n$ on Kogan faces of $GZ_\l^A$ as in Section \ref{s.mitosis}.
In particular, Example \ref{e.FFLV} describes these operations for $n=2$ because the GZ polytope in this case is unimodularly equivalent to the FFLV polytope.

By definition, any Kogan face is given only by equations of type $x^{i-1}_j= x^i_j$.
Hence, every Kogan face $F$ can be encoded by a GZ table filled with $+$.
Namely, if $F$ lies in the hyperplane $\{x^{i-1}_j= x^i_j\}$ then put $+$ in cell $(i,j)$ (that is, in the $j$-th cell of the $i$-th row).
Otherwise, leave cell $(i,j)$ empty.
The resulting GZ table filled with $+$ will be called the {\em diagram} of the face $F$ and denoted by $D(F)$.

Define a bijective correspondence between pipe dreams in type $A_n$ and diagrams of Kogan faces of $GZ^A_\l$ by identifying cell $(i,j)$ in a GZ table with cell $(j,i)$ in a pipe dream.
The correspondence is illustrated by Figure \ref{f.bijection} (left).
Namely, inscribe the words ``GELFAND ZETLIN POLYTOPE'' (without spaces) inside a pipe dream of type $A_6$ in usual way, that is, start from the top row and write from left to right.
After switching to $GZ$ table, the words will transform to the sequence of letters on Figure \ref{f.bijection} (left).

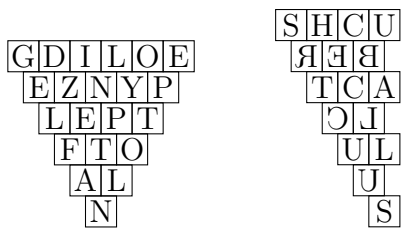
\begin{figure}[h]
		\centering
\begin{tikzpicture}[x=1em,y=-1em]
			\draw (0,0) rectangle +(1,1);
			\draw (1,0) rectangle +(1,1);
			\draw (2,0) rectangle +(1,1);
			\draw (3,0) rectangle +(1,1);
            \draw (4,0) rectangle +(1,1);
            \draw (5,0) rectangle +(1,1);
			\draw (0.5,1) rectangle +(1,1);
			\draw (1.5,1) rectangle +(1,1);
			\draw (2.5,1) rectangle +(1,1);
            \draw (3.5,1) rectangle +(1,1);
            \draw (4.5,1) rectangle +(1,1);
			\draw (1,2) rectangle +(1,1);
			\draw (2,2) rectangle +(1,1);
            \draw (3,2) rectangle +(1,1);
            \draw (4,2) rectangle +(1,1);
            \draw (1.5,3) rectangle +(1,1);
			\draw (2.5,3) rectangle +(1,1);
            \draw (3.5,3) rectangle +(1,1);
            \draw (2,4) rectangle +(1,1);
            \draw (3,4) rectangle +(1,1);
            \draw (2.5,5) rectangle +(1,1);
			\draw (0.5,0.5) node{G};
			\draw (1,1.5) node{E};
			\draw (1.5,2.5) node{L};
			\draw (2,3.5) node{F};
            \draw (2.5,4.5) node{A};
			\draw (3,5.5) node{N};
			\draw (1.5,0.5) node{D};
			\draw (2,1.5) node{Z};
            \draw (2.5,2.5) node{E};
            \draw (3,3.5) node{T};
            \draw (3.5,4.5) node{L};
            \draw (2.5,0.5) node{I};
            \draw (3,1.5) node{N};
            \draw (3.5,2.5) node{P};
            \draw (4,3.5) node{O};
            \draw (3.5,0.5) node{L};
            \draw (4,1.5) node{Y};
            \draw (4.5,2.5) node{T};
            \draw (4.5,0.5) node{O};
            \draw (5,1.5) node{P};
            \draw (5.5,0.5) node{E};
		\end{tikzpicture} \qquad
		\begin{tikzpicture}[x=1em,y=-1em]
			\draw (0,0) rectangle +(1,1);
			\draw (1,0) rectangle +(1,1);
			\draw (2,0) rectangle +(1,1);
			\draw (3,0) rectangle +(1,1);
			\draw (0.5,1) rectangle +(1,1);
			\draw (1.5,1) rectangle +(1,1);
			\draw (2.5,1) rectangle +(1,1);
			\draw (1,2) rectangle +(1,1);
			\draw (2,2) rectangle +(1,1);
            \draw (3,2) rectangle +(1,1);
			\draw (1.5,3) rectangle +(1,1);
            \draw (2.5,3) rectangle +(1,1);
            \draw (2,4) rectangle +(1,1);
            \draw (3,4) rectangle +(1,1);
            \draw (2.5,5) rectangle +(1,1);
            \draw (3,6) rectangle +(1,1);
			\draw (0.5,0.5) node{S};
			\draw (1.5,0.5) node{H};
			\draw (2.5,0.5) node{C};
			\draw (3.5,0.5) node{U};
            \draw (3,1.5) node{\reflectbox{B}};
			\draw (2,1.5) node{\reflectbox{E}};
			\draw (1,1.5) node{\reflectbox{R}};
			\draw (1.5,2.5) node{T};
            \draw (2.5,2.5) node{C};
            \draw (3.5,2.5) node{A};
            \draw (3,3.5) node{\reflectbox{L}};
            \draw (2,3.5) node{\reflectbox{C}};
            \draw (2.5,4.5) node{U};
            \draw (3.5,4.5) node{L};
            \draw (3,5.5) node{\reflectbox{U}};
            \draw (3.5,6.5) node{S};
		\end{tikzpicture}
		\caption{Correspondence between pipe dreams and GZ patterns in type $A_6$ (left) and type $C_4$ (right).}
		\label{f.bijection}
\end{figure}

\begin{example}
The pipe dream on Figure \ref{f.pipe_dream}  (left) corresponds to the Kogan face $F$ given by eight equations $\l_1=x^1_1=x^2_1=x^3_1=x^4_1$, $x^2_2=x^3_2$, $\l_3=x^1_3$, $\l_4=x^1_4$, $\l_5=x^1_5$.
In particular, $\dim F=13$.
It is easy to check that $\M_1^v(F)$ consists of three Kogan faces of dimension $14$ whose diagrams correspond to pipe dreams on Figure \ref{f.mitosisA}.
\end{example}

Using the bijection between pipe dreams and GZ tables we can extend mitosis operations $\MA_1$,\ldots, $\MA_n$ from pipe dreams to GZ tables.
Informally speaking, we replace rows with columns when applying two-row mitosis.
We will use the same notation for mitosis operations on GZ tables as it will be clear from the context whether we talk about pipe dreams or about GZ tables.

It turns out that geometric operation $\M_i^v$ on certain Kogan faces is combinatorially equivalent to mitosis operation $\MA_i$ on the corresponding pipe dreams.
The following theorem describes the precise relation between the geometric mitosis on Kogan faces of a GZ polytope and the combinatorial mitosis on GZ tables.

\begin{thm}\label{t.main}
Let $F\subset GZ_\l^A$ be a Kogan face such that
its diagram $D(F)$ has $+$ in cell $(1,i)$ and no $+$ in cells $(j,i+1)$ for all $j$.
Then $\M_i^v(F)$ consists of Kogan faces whose diagrams are obtained from $D(F)$ by the mitosis operation $\MA_i$ on GZ tables.
\end{thm}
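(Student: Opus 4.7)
I would start by identifying the face $\exp(F)$ explicitly: it is obtained from $F$ by dropping the single equation $x^0_i=x^1_i$, i.e., by erasing the $+$ in cell $(1,i)$ of $D(F)$. The hypothesis that column $i+1$ of $D(F)$ is empty ensures that this equation is independent of the others defining $F$, so $\exp(F)$ has dimension $\dim F+1$ and $F=\exp(F)\cap P_i$, confirming that $F$ is admissible. I would then analyze $G:=\exp(F)\cap Q_i$: substituting $x^1_i=\l_{i+1}$ into $\exp(F)$ and using the equations $x^{j-1}_i=x^j_i$ for $j=2,\ldots,r_D$ (from the $+$'s of $D(F)$ in cells $(2,i),\ldots,(r_D,i)$), together with the interlacing along column $i$, forces $x^1_i=x^2_i=\cdots=x^{r_D}_i=\l_{i+1}$ on $G$; hence $\dim G=\dim F$.

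For the forward direction, for each $p\in\mathcal J(D(F))=\{1,\ldots,r_D\}$ let $E_p$ be the Kogan face whose diagram has $+$'s at $(1,i+1),\ldots,(p-1,i+1)$, at $(p+1,i),\ldots,(r_D,i)$, and at the same positions as $D(F)$ outside columns $i$ and $i+1$---this is exactly the offspring produced by the combinatorial rule $\MA_i$. It is immediate that $v\in E_p$, that the $+$-count gives $\dim E_p=\dim F+1$, that $E_p\not\subset P_i$ (since $(1,i)\notin D(E_p)$), and that $E_p\not\subset Q_i$ (since $v\in E_p\setminus Q_i$). For condition (2), I would repeat the propagation: on $E_p\cap Q_i$ the equalities $\l_{i+1}=x^0_{i+1}=\cdots=x^{p-1}_{i+1}$ from column $i+1$, combined with $x^1_i=\l_{i+1}$ and the interlacing $x^k_i\ge x^{k-1}_{i+1}$, force $x^k_i=\l_{i+1}$ for $k=1,\ldots,p$; the $+$'s at $(p+1,i),\ldots,(r_D,i)$ then propagate this to $x^{r_D}_i=\l_{i+1}$. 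Hence every defining equation of $G$ holds on $E_p\cap Q_i$, giving $E_p\cap Q_i\subset G$.

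For the converse, let $E\in\M_i^v(F)$. Since $v\in E$ and $v\notin Q_i$ we have $E\not\subset Q_i$, so $E\cap Q_i$ is a face of $\D$ of dimension $\dim E-1=\dim F=\dim G$; combined with $E\cap Q_i\subset G$ from condition (2), this forces $E\cap Q_i=G$. Since Kogan faces are determined by their diagrams, it suffices to pin down $D(E)$. Write $C_i(E)$ and $C_{i+1}(E)$ for the sets of rows with $+$'s in columns $i$ and $i+1$ of $D(E)$. Condition (1) gives $1\notin C_i(E)$; comparing the equations of $E$ with those of $G$ in columns other than $i,i+1$ forces $D(E)$ and $D(F)$ to agree there; and the $+$'s at rows $j>r_D$ in column $i$ of $D(F)$ must persist in $D(E)$ (otherwise the equation $x^{j-1}_i=x^j_i$ of $F$ would fail on $E\cap Q_i$, contradicting $E\cap Q_i=G$). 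Requiring $x^k_i=\l_{i+1}$ on $E\cap Q_i$ for every $k=1,\ldots,r_D$ then forces $C_{i+1}(E)$ to be an initial segment $\{1,\ldots,p-1\}$ and $C_i(E)\cap\{2,\ldots,r_D\}$ to be the complementary trailing segment $\{p+1,\ldots,r_D\}$ for a single $p\in\{1,\ldots,r_D\}$, so $E=E_p$.

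The main obstacle is this reverse-propagation step. Concretely, one must show that a gap in $C_i(E)\cap\{2,\ldots,r_D\}$ at some row $k$, or a non-initial configuration of $+$'s in $C_{i+1}(E)$, breaks the chain that would force $x^k_i=\l_{i+1}$; in each such case I would exhibit an explicit point of $E\cap Q_i$ where the equation $x^{k-1}_i=x^k_i$ fails, so that $E\cap Q_i\ne G$, contradicting condition (2). A secondary subtlety is tracking the $+$'s at rows $j>r_D$ in column $i$ of $D(F)$: they must persist in $D(E)$ even though they lie outside the range $\{1,\ldots,r_D\}$ governed by two-row mitosis, and one uses the non-degeneracy of the interlacing at those rows to check they do not interact with the propagation chain.
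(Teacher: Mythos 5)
Your forward direction (constructing the offsprings $E_p$ and verifying conditions (1) and (2) of Definition \ref{d.mitosis} by propagating $x^1_i=\l_{i+1}$ up column $i$ via the interlacing inequalities) is sound and close in spirit to the paper's argument. The genuine gap is in the converse. You deduce $E\cap Q_i=G$ from the claim that $E\cap Q_i$ has dimension $\dim E-1$, and the rest of your argument (agreement of $D(E)$ with $D(F)$ outside columns $i,i+1$; persistence of the $+$'s below row $r_D$; the initial/trailing-segment structure) repeatedly invokes this equality. But the claim is false: for a face $E$ of a Cayley sum with $E\not\subset P,Q$, the intersection $E\cap Q$ can drop dimension by more than one, and this happens already for the offsprings themselves. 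In $GZ^A_{(2,1,0)}$ (equivalently the paper's Example \ref{e.FFLV}), take $i=1$ and let $F$ be the edge with $+$ in cells $(1,1)$ and $(2,1)$; the offspring for $p=r_D=2$ is the two-dimensional face $E_2=\{x^1_2=\l_2\}$, and $E_2\cap Q_1$ is a single vertex, whereas $G=\exp(F)\cap Q_1$ is an edge. So $E_2\cap Q_1\subsetneq G$: your intermediate statement fails for a face that the theorem asserts lies in $\M^v_1(F)$, and every deduction you hang on ``contradicting $E\cap Q_i=G$'' collapses. Condition (2) is only an inclusion, and the proof must work with it as such.

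The paper's proof does exactly that: it never claims $E\cap Q=G$, but instead observes that any type-$A$ equation satisfied by $\exp(F)$ and not formally among the equations of $E$ must be implied by $B_{1,i}$ together with the type-$A$ equations of $E$, and Lemma \ref{l.GZ} pins down precisely when such an implication can occur (only equations $A_{p,i}$, and only in the presence of the chain $A_{k,i+1}$ for $k<p$). A counting argument (``for dimension reasons'') plus induction on $r$, peeling off the facet $\G_{r,i}$, then classifies the offsprings. You have correctly identified the reverse-propagation step as the crux, but the route you propose for it rests on the false dimension claim; what is actually needed there is an implication analysis of the defining equations in the spirit of Lemma \ref{l.GZ}, not an equality of faces.
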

\begin{proof}
In order to compute $\M_i^v(F)$ we have to consider not only Kogan faces.
In general, a face of $GZ^A_\l$ is given by equations of two types: either $x^{i-1}_{j}=x^i_{j}$ (type $A$) or $x^i_j=x^{i-1}_{j+1}$ (type $B$).
\begin{defin}\label{d.AB}
Following \cite{Ko} define equations $A_{i,j}$ and $B_{i,j}$, respectively, as
$x^{i-1}_{j}=x^i_{j}$ and $x^i_j=x^{i-1}_{j+1}$.
\end{defin}

We now apply Definition \ref{d.mitosis} to $P:=P_i$ and $Q:=Q_i$.
Note that any Kogan face is admissible because it contains a simple vertex, namely, the Kogan vertex $v$.
The face $E_1:=\exp(F)$ is obtained from $F$ by removing a single equation $A_{1,i}$.
Hence, the face $\exp(F)\cap Q$ is given by the same equations as $F$ with one exception: equation $A_{1,i}$ is replaced by equation $B_{1,i}$.

Let $E$ be a Kogan face of dimension $\ell+1$ (where $\ell:=\dim F$) such that $E\cap Q\subset \exp(F)\cap Q$.
Let ($X_1$,\ldots, $X_{d-\ell-1}$) be the collection of equations of type $A$ that define $E$.
If $E\ne \exp(F)$ then there exists an equation $Y$ (of type A) such that $\exp(F)$ satisfies $Y$ but $Y$ does not follow from ($X_1$,\ldots, $X_{d-\ell-1}$).
However, $Y$ should follow from equations $B_{1,i}$, $X_1$,\ldots, $X_{d-\ell-1}$.
Hence, the collection of equations ($B_{1,i}$, $Y$, $X_1$,\ldots, $X_{d-\ell-1}$) is redundant, while the collection ($Y$, $X_1$,\ldots, $X_{d-\ell-1}$) is not.
This is only possible if $A_{1,i+1}$ is contained among the equations $X_1$,\ldots, $X_{d-\ell-1}$ as the following lemma shows:
\begin{lemma} \label{l.GZ} Let $(X_1,\ldots, X_{s})$ be a collection of equations of type $A$ that does not contain equation $A_{p,q}$.
If $A_{p,q}$ follows from $(B_{j,i},X_1,\ldots, X_s)$ then three conditions hold:
\begin{enumerate}
  \item $q=i$;
  \item $p>j$;
  \item $(X_1,\ldots, X_s)$ contains $A_{k,i+1}$ for all $k$ such that $j\le k<p$.
\end{enumerate}
\end{lemma}

\begin{proof}
The statement follows directly from Definition \ref{d.AB}.
\end{proof}
If ($X_1$, $X_2$,\ldots, $X_{d-\ell-1}$) contains $A_{1,i+1}$, $A_{2,i+1}$,\ldots, $A_{k,i+1}$ but does not contain $A_{k+1,i+1}$ for some $k\ge1$,
then $A_{2,i}$,\ldots, $A_{k+1, i}$ are the only equations that follow from ($B_{1,i}$, $X_1$,\ldots, $X_{d-\ell-1})$.
Hence, if $F$ satisfies $A_{p,q}$, and $A_{p,q}$ is not contained among ($X_1$, $X_2$,\ldots, $X_{d-\ell-1}$)) then $A_{p,q}$ must coincide with one of the equations $A_{2,i}$,\ldots, $A_{k+1, i}$.
For dimension reasons, all $k$ equations $A_{2,i}$,\ldots, $A_{k+1, i}$ must be absent in ($X_1$, $X_2$,\ldots, $X_{d-\ell-1}$) (to compensate for the presence of $k$ equations $A_{1,i+1}$, $A_{2,i+1}$,\ldots, $A_{k,i+1}$).
In particular, if $F$ does not satisfy $A_{2,i}$ then $\M_i^v(F)$ consists of a single offspring $\exp(F)$.

We now proceed by induction on $r$ where $r$ is the maximal number such that $F$ satisfies the equations $A_{1,i}$,\ldots , $A_{r,i}$.
Let $\G_{r,i}$ be the facet given by equation $A_{r,i}$, and $F'$ the face given by the same equations as $F$ except for $A_{r,i}$, that is, $F=F'\cap \G_{r,i}$.
It is easy to check that there is a bijection between $\M^v_i(F')$ and
the subset $S\subset \M^v_i(F)$ that consists of all faces $E_i\in \M^v_i(F)$ such that $E_i \subset \G_{r,i}$.
Namely, a face $E'\in \M^v_i(F')$ corresponds to the face $E=E'\cap \G_{r,i}\in \M^v_i(F)$.
Hence, the faces in $S$ can be described by the induction hypothesis applied to $F'$.

It remains to describe the faces $E_i\in\M^v_i(F)\setminus S$.
By condition (2) of Definition \ref{d.mitosis} and part (3) of Lemma \ref{l.GZ} applied to $A_{r,i}$, we have that if $E_i\not\subset \G_{r,i}$, then $E_i$ must satisfy equations $A_{1,i+1}$, $A_{2,i+1}$\ldots, $A_{r-1,i+1}$.
Hence, there is a single offspring
$E_r\in\M^v_i(F)\setminus S$ obtained from $F$ by removing equation $A_{r,i}$ and by replacing equations $A_{j,i}$ by equations $A_{j,i+1}$ for all $j<r$.
\end{proof}

We now adapt geometric mitosis operations so that Theorem \ref{t.main} holds for all {\em reduced} Kogan faces.
The definition of {\em reduced} pipe dreams can be found in \cite[Definition 1]{M} (these pipe dreams are mostly used in applications to Schubert calculus).
A Kogan face $F$ is {\em reduced} if its diagram $D(F)$ is reduced.
Below we use notation of Definition \ref{d.AB} from the proof of Theorem \ref{t.main}.

\begin{cor}\label{c.mitosisA}
Let $F\subset GZ_\l^A$ be a reduced Kogan face.
Define an adapted geometric mitosis operation $\mM_{i}^v$ as follows.
\begin{enumerate}
\item Consider the face $\env(F)\subset GZ_\l^A$ given by equations $A_{k,i}$ and $A_{k,i+1}$ for all $k$ such that $F$ satisfies both $A_{k,i}$ and $A_{k,i+1}$.
\item Find the minimal index $s$ such that $\env(F)$ does not satisfy equation $A_{s,i}$.
     Consider the facets $P_i^F$, $Q_i^F\subset \env(F)$ given by equations $A_{s,i}$ and $B_{s,i}$, respectively.
\item Define $\mM_{i}^v(F)$ by applying Definition \ref{d.mitosis} to $\Delta=\env(F)$, $P=P_i^F$ and $Q=Q_i^F$.
\end{enumerate}
Then $\mM_{i}^v(F)$ consists of exactly those Kogan faces whose diagrams lie in the set $\MA_i(D(F))$.
\end{cor}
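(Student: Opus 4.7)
The plan is to mimic the proof of Theorem \ref{t.main} inside the envelope $\env(F)$ in place of the full polytope $GZ_\l^A$. The envelope has been designed so that the pairs of equations $(A_{k,i}, A_{k,i+1})$ that $F$ satisfies simultaneously (i.e., rows $k$ where $D(F)$ has $+$ in both cells of columns $i$ and $i+1$) are already built into it. These are exactly the rows frozen by the two-row mitosis, so collapsing them brings us to a setup where the hypothesis of Theorem \ref{t.main} holds locally inside $\env(F)$.

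First I would check that $\env(F) = \Delta(P_i^F, Q_i^F)$ up to affine isomorphism, so that Definition \ref{d.mitosis} applies. This is analogous to the decomposition $GZ_\l^A = \Delta(P_i, Q_i)$ from Section \ref{s.mitosis}, shifted so that the new ``first row'' of the local mitosis is row $s$ and the boundary values come from the envelope equations rather than from $\l$. Then I would verify that either $F \subset P_i^F$, in which case the geometric mitosis is well-defined, or else $\MA_i(D(F)) = \varnothing$ by a direct check on the two-row mitosis rules: if $F \not\subset P_i^F$ then the initial consecutive run of $+$ in column $i$ of $D(F)$ ends before reaching the first unfrozen row $s$, which forces $\mathcal J(D(F)) = \varnothing$.

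Next I would run the inductive argument from the proof of Theorem \ref{t.main} inside $\env(F)$. Lemma \ref{l.GZ} only uses the $A/B$ equational structure and applies verbatim. Inducting on the largest $r \ge s$ such that $F$ satisfies $A_{s,i}, A_{s+1,i}, \ldots, A_{r,i}$ (restricted to unfrozen rows) produces, for each admissible index $p$, a face obtained from $F$ by deleting $A_{p,i}$ and replacing the preceding unfrozen $A_{j,i}$ by $A_{j,i+1}$. Translating equations back to diagrams via the bijection of Section \ref{s.Schubert}, this is precisely the combinatorial rule defining $\MA_i(D(F))$, and since all these faces contain $v$ they are Kogan faces of $GZ_\l^A$.

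The main obstacle is the reducedness hypothesis: it is needed to rule out ``stray'' $+$ in column $i+1$ of $D(F)$ sitting outside the frozen rows, which otherwise would not be captured by the envelope and would spoil the reduction to the local picture. In a reduced pipe dream such a stray $+$ at $(k, i+1)$ without a matching $+$ at $(k, i)$ corresponds to a non-reduced crossing pattern, but formalising this cleanly — and carefully tracking how the index $s$ of the envelope relates to the prefix length $r_D$ in the two-row mitosis — is the technical point that will require the most care.
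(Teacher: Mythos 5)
Your plan is exactly the paper's: the paper offers no proof of this corollary beyond the single sentence that it is ``completely analogous to the proof of Theorem \ref{t.main}'', and rerunning that theorem's induction inside $\env(F)$ with $P_i^F$, $Q_i^F$ in place of $P_i$, $Q_i$ (the frozen pairs $A_{k,i}$, $A_{k,i+1}$ absorbed into the ambient polytope, the run shifted to start at row $s$) is precisely what is intended, so your outline matches the intended argument. One correction to your last paragraph: reducedness does \emph{not} exclude a $+$ in cell $(k,i+1)$ with cell $(k,i)$ empty --- reduced pipe dreams have such cells routinely. Such a cell corresponds to a square $bk$ with $ak$ empty, hence $k>r_D$, and the equation $A_{k,i+1}$ is simply inherited by every offspring (by Lemma \ref{l.GZ} it cannot be newly implied by $B_{s,i}$ together with type-$A$ equations), so it is harmless both combinatorially and geometrically; the reducedness hypothesis is needed for other reasons (e.g.\ so that the offspring diagrams are again diagrams of honest Kogan faces of dimension $\dim F+1$), not to rule these cells out.
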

The proof of Corollary \ref{c.mitosisA} is completely analogous to the proof of Theorem \ref{t.main}.
\begin{remark}
Note that the set $\M_{i}^v(F)$ might, in general, contain more faces than the set
$\mM_{i}^v(F)$.
By applying mitosis inside the face $\env(F)$ instead of the whole $GZ_\l^A$ we get rid of these extra faces.
The definition of $\env(F)$ in Corollary \ref{c.mitosisA} is ad hoc, it relies heavily on combinatorics of GZ polytopes in type $A$.
It would be interesting to find a more geometric definition of $\env(F)$.

The choice of $P_i^F$ and $Q_i^F$ stems from the classical inductive construction of GZ bases and polytopes based on the chain of subgroups $GL_2(\C)\subset GL_3(\C)\subset\ldots\subset GL_{n+1}(\C)$.
\end{remark}

\subsection{Gelfand--Zetlin polytopes in type $C$}\label{s.C}
We now describe analogous constructions in type $C$.
We omit details that are the same as in type $A$ case and focus on unique features of type $C$ case.
Put $d=n^2$, and put $\l:=(\l_1\ge\l_2\ge\ldots\ge\l_n\ge \l_{n+1}=0)$.
A {\em GZ table of type $C_n$} is defined according to the pattern on Figure \ref{f.patterns} (right).
Roughly speaking, a GZ table of type $C_n$ is a half of a GZ table of type $A_{2n}$.
As in type $A$ case, the GZ polytope $GZ^C_\l\subset\R^d$ is defined by $2d$ interlacing inequalities that come from a $GZ$ pattern in type $C$.
Again, the polytope $GZ_\l^C$ for $\l=(n,n-1,\ldots,0)$ can be represented as Cayley sum of two polytopes in $n$ different ways.

Let $v$ be the {\em symplectic Kogan vertex} of $GZ_\l^C$ as defined in \cite[Section 6]{F}.
Using notation from Definition \ref{d.AB} (see the proof of Theorem \ref{t.main}) and regarding a GZ pattern of type $C_n$ as part of a GZ pattern of type $A_{2n-1}$ we may define $v$ by equations $A_{i,j}$ for all odd $i\le 2n-1$ and equations $B_{i,j}$ for all even $i<  2n-1$.
Similarly to type $A$ case, a face $\G\subset GZ^C_\l$ is called a {\em symplectic Kogan face} if $\G$ contains $v$.
Again, there are $n$ mitosis operations $\M_1^v$,\ldots, $\M_n^v$.

As in type $A$ case, every symplectic Kogan face $F$ can be encoded by a GZ table $D(F)$ of type $C$ filled with $+$.
Define bijective correspondence between skew pipe dreams and diagrams of Kogan faces as in \cite[Section 6]{F}.
For instance, if words ``SCHUBERT CALCULUS'' (without spaces) are inscribed into a skew pipe dream of type $C_4$ in usual way then they get transformed into boustrophedon\footnote{I am grateful to Evgeny Smirnov from whom I learnt the proper name of this writing style. } writing in a $GZ$ pattern of type $C_4$ on Figure \ref{f.bijection} (right).
Again, we extend mitosis operations $M_1^C$,\ldots, $M_n^C$ from skew pipe dreams to GZ tables in type $C_n$.

Note that $M_1^C$ is combinatorially different from $M_2^C$,\ldots, $M_n^C$.
In applications to Schubert calculus, mitosis operations correspond to generators of the group of signed permutations $B_n$ on $n$ elements.
There is a special generator $s_1:(1,2,\ldots,n)\mapsto (-1,2,\ldots,n)$ (change of sign), and the elementary transpositions $s_2$,\ldots,$s_n$, namely, $s_i=(i-1~i)$.
If we regard $B_n$ as the Weyl group of the symplectic group $Sp_{2n}(\C)$, then $s_1$ is the simple reflection corresponding to the longer root.
The mitosis operation $M_n^C$ corresponds to the special generator $s_1$, hence, it is natural to expect that $M_n^C$ will be special.

The following theorem is analogous to Theorem \ref{t.main} and can be proved using similar arguments.
\begin{thm}\label{t.C}
Let $F\subset GZ^C_\l$ be a symplectic Kogan face.
\begin{enumerate}
\item Let $i=1$,\ldots, $n-1$.
If the diagram $D(F)$ has $+$ in cell $(1,i)$, and no $+$ in cells $(2k+1,i-k+1)$ and $(2k,i-k)$ for all $k$, then $\M^v_i(F)$ consists of faces whose diagrams are obtained from $D(F)$ by applying $M^C_{n-i+1}$.
\item If the diagram $D(F)$ has $+$ in cell $(1,n)$, and no $+$ in cells $(2k,n-k)$ then $\M^v_n(F)$ consists of faces whose diagrams are obtained from $D(F)$ by applying $M^C_1$.
\end{enumerate}
\end{thm}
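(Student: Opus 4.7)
The plan is to adapt the argument for Theorem~\ref{t.main}, replacing the ``horizontal row'' of type $A$ equations with a staircase of alternating type $A$ and type $B$ equations dictated by the symplectic Kogan vertex. First I would identify the relevant Cayley sum decomposition explicitly in terms of equations: for $i=1,\ldots,n-1$, the facet $P_i\subset GZ^C_\l$ is the one on which the coordinate in cell $(1,i)$ is frozen to its upper bound (equation $A_{1,i}$) and $Q_i$ corresponds to equation $B_{1,i}$; for $i=n$ the decomposition uses the right edge of the pattern (the column truncated by $\l_{n+1}=0$). Every symplectic Kogan face $F$ is admissible because $v$ is a simple vertex, so $\exp(F)$ is obtained from $F$ simply by deleting the defining equation $A_{1,i}$.

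The core of the argument is that Lemma~\ref{l.GZ}, being purely local to the interlacing GZ pattern, applies verbatim to the type $C_n$ pattern viewed as the upper half of a type $A_{2n-1}$ pattern. Given a candidate offspring $E$ with $\dim E=\dim F+1$ and $E\cap Q_i\subset \exp(F)\cap Q_i$, the lemma forces the equation missing from $E$ relative to $\exp(F)$ to lie in a specific staircase whose cells are exactly $(2k+1,i-k+1)$ and $(2k,i-k)$ for $i<n$, and $(2k,n-k)$ alone for $i=n$. The hypothesis that these cells contain no $+$ in $D(F)$ (other than the top one) is precisely what prevents any equation outside the staircase from participating, and this is what makes the geometric mitosis coincide with two-row mitosis on the corresponding pair of columns of the skew pipe dream under the bijection indicated in Figure~\ref{f.bijection}.

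Then I would induct on the maximal $r$ such that $F$ satisfies the first $r$ equations of the staircase. Writing $\G_r$ for the facet cut out by the $r$-th staircase equation and $F'$ for the face obtained by removing that equation from the defining collection of $F$, the map $E'\mapsto E'\cap\G_r$ gives a bijection between $\M^v_i(F')$ and the subset of $\M^v_i(F)$ contained in $\G_r$, so the induction hypothesis handles those offsprings. The remaining offspring, obtained by deleting the top staircase equation and shifting all previous staircase equations by one step to the next column, reproduces the ``move $+$ down'' prescription of two-row mitosis.

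The main obstacle will be the careful bookkeeping of the alternation between type $A$ equations (at odd rows) and type $B$ equations (at even rows): the staircase is not of a single type, so Lemma~\ref{l.GZ} must be invoked separately at each $A$-to-$B$ transition and vice versa, and the shift in indexing $n-i+1$ between the two sides of the bijection has to be tracked throughout. The case $i=n$ requires additional care because the truncation of the pattern at $\l_{n+1}=0$ collapses the staircase to a single column, and the ``short root'' generator $s_1$ behaves differently from the long-root generators, which is exactly why $\M^v_n$ matches $M^C_1$ rather than fitting the pattern $\M^v_i\leftrightarrow M^C_{n-i+1}$ of part (1).
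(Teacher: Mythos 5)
Your proposal matches the paper's approach: the paper gives no separate proof of this theorem, stating only that it ``is analogous to Theorem \ref{t.main} and can be proved using similar arguments,'' and your outline --- identifying $P_i$, $Q_i$ with the equations $A_{1,i}$, $B_{1,i}$, admissibility from the simple vertex $v$, a zigzag analogue of Lemma \ref{l.GZ}, and induction on the length $r$ of the initial run of staircase equations satisfied by $F$ --- is precisely that adaptation. A few small corrections to your write-up: the type-$C$ analogue of Lemma \ref{l.GZ} cannot be applied ``verbatim,'' since the defining equations of a symplectic Kogan face alternate between types $A$ and $B$ with the parity of the row (a point you do acknowledge afterwards); the cells $(2k+1,i-k+1)$ and $(2k,i-k)$ are the $b$-cells required to be empty, not the cells housing the equation deleted from $E$ (those form the complementary $a$-staircase, as in Theorem \ref{t.main} where the deleted equations sit in column $i$ while column $i+1$ is hypothesized empty); the correspondence $\M^v_n\leftrightarrow \MC_1$ \emph{does} fit the indexing pattern $\MC_{n-i+1}$ of part (1) since $n-n+1=1$, the case $i=n$ being singled out only because $\MC_1$ acts on the middle column and the forbidden-cell condition degenerates to the single family $(2k,n-k)$; and $s_1$ is the reflection in the \emph{long} root of $Sp_{2n}(\C)$, not the short one.
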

\section{Applications to Schubert calculus}\label{s.ppo}
We now explain how the notion of simple geometric mitosis fits into in the context of intersection theory.
In particular, we relate Corollary \ref{c.mitosisA} and Theorem \ref{t.C} with the analogous results on Schubert calculus in types $A$ and $C$.
Below we use the notion of {\em polytope ring} (aka {\em Khovanskii--Pukhlikov ring}).
The definition can be found, for instance, in \cite[Section 2]{K21}.
We refer the reader to \cite{F} for more details on applications of Khovanskii--Pukhlikov rings to Schubert calculus.

We use notation of Section \ref{s.mitosis}.
Let $R_\Delta$ and $R_P$ be the polytope rings of $\Delta$ and $P$.
Assume that $R_\Delta$ and $R_P$, respectively, are isomorphic to the Chow rings of smooth varieties $Y$ and $X$, where $Y=\P(E)$ is the projectivization of a rank two vector bundle $E$ on $X$ (see \cite[Section 4]{K21} for motivation behind such an assumption).
Let $p$ be the natural projection from $Y$ to $X$.
Then there is a push-pull operator
$$p^*p_*:CH^*(Y)\to CH^{*-1}(Y),$$
which is a homomorphism of $CH^*(X)$-modules.
The simple geometric mitosis is an attempt to describe explicitly the action of the push-pull operator on $R_\Delta\simeq CH^*(Y)$ using representations of elements of $R_\Delta$ by linear combinations of faces of $\Delta$.
It would be interesting to formalize the connection between Definition \ref{d.mitosis} and the action of $p^*p_*$ on faces of $\Delta$ in the general setting.
Below we will exhibit  such a connection in the special case of GZ polytopes and flag varieties.

Let $\Delta$ be the GZ polytope for a classical group $G$ and a strictly dominant weight $\l$.
Let $B\subset G$ denote a Borel subgroup of $G$, and let $Y:=G/B$ denote the complete flag variety for $G$.
The ring $R_\Delta$ is isomorphic to the subring of $CH^*(Y)$ generated by the first Chern classes of line bundles on $G/B$ \cite[Corollary 5.3, Remark 2.4]{Ka}.
For $G=GL_{n+1}(\C)$ and $Sp_{2n}(\C)$, this subring coincides with $CH^*(Y)$ (in general, the discrepancy between the subring and the whole $CH^*(Y)$ is measured by the {\em torsion index} of $G$, see \cite{T} for more details).
In particular, if $\l=\rho:=(n,n-1,\ldots,1,0)$, then $R_\Delta\simeq CH^*(Y)$ in types $A$ and $C$.

Denote by $\alpha_1$,\ldots, $\alpha_n$ the simple roots of $G$.
Let $X_i$ be the partial flag variety $G/P_i$ for the minimal parabolic subgroup $P_i$ corresponding to $\a_i$.
Note that the natural projection $p_i:Y\to X_i$ turns $Y$ into a $\P^1$-fibration (it can be realized as the projectivization of the rank two bundle $E_i:={p_i}_*\mathcal L(\rho)$ where $\mathcal L(\rho)$ is the line bundle on $Y$ corresponding to the weight $\rho$).
The classical divided difference (or push-pull) operator in Schubert calculus is defined as $\d_i:=p_i^*{p_i}_*$.

Recall that the operators $\d_1$,\ldots, $\d_n$ are used to generate the Schubert classes $[X_w]\in CH^*(Y)$ starting from the class $[X_{\rm id}]$ of a point.
More precisely, let $w\in W$ be an element of the Weyl group of $G$.
Choose a reduced decomposition $w=s_{i_1}\cdots s_{i_\ell}$.
Here $s_i$ denotes the reflection with respect to the root $\a_i$.
Then $[X_w]=\d_{i_\ell}\cdots\d_{i_1}[X_{\rm id}]$.
If we choose the Kogan vertex $v$ as a representative of the class $[X_{\rm id}]$ in $R_\Delta$, then the action of $\d_{i_\ell}\cdots\d_{i_1}$ on $[X_{\rm id}]$ can be computed using mitosis operations as follows.

\begin{thm}\label{t.SchubertA}
Let $G=GL_{n+1}(\C)$ and $\Delta=GZ^A_\rho$.
Under the isomorphism $CH^*(Y)\simeq R_\Delta$ the Schubert cycle $[X_w]$ can be represented as the class of the sum of faces $F\subset \Delta$ where $F$ runs through the set
$$S_w=\mM_{i_\ell}^v\cdots \mM^v_{i_1}(v).$$
In particular, the action of divided difference operator $\d_i$ on
the Schubert cycle $[X_w]$ gets represented by the action
of $\mM_{i}^v$ on faces from $S_w$:
$$S_{ws_i}=\mM_{i}^v(S_w).$$
\end{thm}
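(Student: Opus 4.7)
The plan is to proceed by induction on the length $\ell = \ell(w)$, reducing everything to a single identity for $\d_i$ on individual face classes. The base case $w = \id$ requires $[X_{\id}] = [v]$ in $R_\Delta$ under Kaveh's isomorphism $R_\Delta \simeq CH^*(Y)$ from \cite{Ka}; this is standard, since the Kogan vertex $v$ is the unique smooth vertex at the tip of the iterated $\P^1$-bundle tower producing $Y$, and so $[v]$ represents a point. The identity $S_{ws_i} = \mM_i^v(S_w)$ is tautological from the definition of $S_w$ via a reduced word, so only the formula $[X_w] = \sum_{F \in S_w} [F]$ requires proof, and by $CH^*(X_i)$-linearity of $\d_i$ the inductive step reduces to the key claim
\begin{equation}\label{eq.keymitosis}
\d_i[F] \;=\; \sum_{E \in \mM_i^v(F)} [E] \quad \text{in } R_\Delta,
\end{equation}
for every reduced Kogan face $F$ with $\ell(ws_i) > \ell(w)$.

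To prove \eqref{eq.keymitosis}, I would pass to the envelope $\env(F)$ and its Cayley sum decomposition $\env(F) = \Delta(P_i^F, Q_i^F)$ furnished by Corollary \ref{c.mitosisA}. Under Kaveh's realization of $R_\Delta$ via an iterated chain of Cayley sums reflecting $GL_2 \subset GL_3 \subset \cdots \subset GL_{n+1}$, the projection $\env(F) \to P_i^F$ corresponds to the $\P^1$-fibration $Y \to X_i$ at the step relevant for $F$. In the polytope ring of this Cayley sum, the push-pull operator for the projection sends $[F]$ to the sum of classes of those faces $E$ singled out by the two conditions of Definition \ref{d.mitosis}: condition (1) selects faces transverse to the $\P^1$-direction (the only ones whose pushforward is nonzero), while condition (2) enforces $E \cap Q_i^F \subset \exp(F) \cap Q_i^F$, which is precisely the incidence requirement for $E$ to contribute with coefficient one to $p_i^* p_{i*}[F]$.

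The main obstacle is reconciling the polytope-ring push-pull on the envelope $R_{\env(F)}$ with the divided difference $\d_i$ acting on the ambient $R_\Delta$. This amounts to verifying that the facets cutting $\env(F)$ out of $\Delta$ (other than $P_i^F$ and $Q_i^F$) are pulled back along $Y \to X_i$, so that the $\P^1$-fibration respects the embedding $\env(F) \hookrightarrow \Delta$; this is the geometric content of the \emph{ad hoc} definition of $\env(F)$ in Corollary \ref{c.mitosisA}, and is precisely what dictates the inductive chain of subgroups used there. Once this compatibility is established, the projection formula yields \eqref{eq.keymitosis}, and reducedness of $F$ guarantees that all offsprings appear with coefficient one and that no face is produced as an offspring of two distinct $F \in S_w$ (so that the union in $\mM_i^v(S_w)$ indeed coincides with the multiset sum); the latter non-overlap can be read off from the bijection with Knutson--Miller mitosis on reduced pipe dreams via Corollary \ref{c.mitosisA} and \cite[Theorem 15]{M}. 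This closes the induction.
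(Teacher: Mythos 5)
Your reduction to the single identity $\d_i[F]=\sum_{E\in\mM_i^v(F)}[E]$ correctly isolates where all the content lies, but that identity is precisely what you do not prove: the paragraph beginning ``The main obstacle is reconciling\dots'' names the required compatibility between the push-pull operator on $R_{\env(F)}$ and $\d_i$ on $R_\Delta$, and then simply asserts ``once this compatibility is established, the projection formula yields'' the claim. Nothing in the paper supplies this step, and the paper explicitly flags it as open: Section \ref{s.ppo} says it ``would be interesting to formalize the connection between Definition \ref{d.mitosis} and the action of $p^*p_*$ on faces of $\Delta$ in the general setting,'' and immediately after Theorem \ref{t.SchubertA} the author writes that ``it would be interesting to find a convex geometric proof.'' Your proposal is exactly that missing convex-geometric proof, with its central lemma left as an assertion. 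Worse, the face-level statement you need is delicate: classes of faces in $R_\Delta$ satisfy many linear relations, so knowing $p_i^*p_{i*}$ on the class $[F]$ does not by itself single out the particular representative $\sum_E [E]$; and Example \ref{e.dualC} shows that in type $C$ the unadapted Definition \ref{d.mitosis} genuinely fails to reproduce $p^*p_*$ on faces, so the analogous claim in type $A$ cannot be waved through without the specific combinatorics of $\env(F)$. There is also a smaller unaddressed point: $S_{ws_i}=\mM_i^v(S_w)$ is only ``tautological'' once one knows $S_w$ is independent of the chosen reduced word, which is itself a consequence of the theorem rather than an input to it.

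The paper's actual proof takes a completely different, citation-based route: it deduces the theorem from Fujita's result \cite[Corollary 5.18]{F} --- which represents $[X_w]$ by the faces produced by Fujita's combinatorial mitosis $M_i$, and is proved by representation-theoretic arguments via \cite[Theorem 5.17]{F} --- combined with Corollary \ref{c.mitosisA}, which identifies the adapted geometric operation $\mM_i^v$ with the Knutson--Miller mitosis $\MA_i$ on diagrams of reduced Kogan faces (and with Remark \ref{r.A} reconciling $\MA_i$ with Fujita's $M_i$ in applications to Schubert calculus). In other words, the geometry in the paper is only used to match two combinatorial rules, while the intersection-theoretic content is imported from \cite{F}. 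If you want to pursue your approach, the deliverable is a self-contained proof that the push-pull operator on the polytope ring of the Cayley sum $\env(F)=\D(P_i^F,Q_i^F)$ sends $[F]$ to $\sum_{E\in\mM_i^v(F)}[E]$ and that this computation is compatible with the inclusion $R_{\env(F)}\to R_\Delta$; as it stands, that is a conjecture, not a proof.
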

This theorem follows from \cite[Corollary 5.18]{F} together with Corollary \ref{c.mitosisA}.
Note that the proof of \cite[Theorem 5.17]{F} (which implies \cite[Corollary 5.18]{F}) uses representation theoretic arguments.
It would be interesting to find a convex geometric proof.
\begin{remark}
In \cite{K16}, different geometric mitosis operations are defined.
They mimick Demazure operators rather than push-pull operators $\d_i$.
In type $A$, they can also be used to prove theorems analogous to Theorem \ref{t.SchubertA} or \cite[Corollary 5.18]{F} due to a special symmetry of GZ diagrams in type $A$.
Namely, the diagrams are symmetric with respect to the reflection $(i,j)\mapsto (j,i)$.

Note that the induction step in \cite[Corollary 3.6]{K16} goes from $s_iw$ to $w$ (not from $ws_i$ to $w$ as in Theorem \ref{t.SchubertA} and in \cite[Theorem 5.17]{F}).
In other words, induction goes along initial subwords of $s_{i_1}s_{i_2}\ldots s_{i_{\ell}}$ (not along terminal subwords).
This difference is matched by the difference between mitosis as defined in \cite{K16} and its transpose (or mirror) mitosis as defined in the present paper.
In particular, all arguments with the mitosis operations applied to $w=s_{i_1}s_{i_2}\ldots s_{i_{\ell}}$ can be immediately translated into analogous arguments with the transpose mitosis operations applied to $w^{-1}=s_{i_\ell}s_{i_{\ell-1}}\ldots s_{i_1}$ (see \cite[Section 5]{F} for more details).
\end{remark}

In type $C$, Theorem \ref{t.C} together with Remark \ref{r.mitosisC} and \cite[Theorem 6.8]{F} suggest that $\M^v_i$ can be adapted so that under the isomorphism $CH^*(Y)\simeq R_\Delta$ the Schubert class $[X_w]$ can be represented as the class of the sum of faces $F\subset \Delta$ where $F$ runs through the set
$$S_w=\mM_{n-i_\ell+1}^v\cdots \mM^v_{n-i_1+1}(v).$$

In \cite[Corollary 6.13]{F}, another  presentation of Schubert cycles by faces of $GZ^C_\rho$ is obtained using the {\em dual Kogan faces}.
Similarly to the definition of symplectic Kogan vertex in Section \ref{s.C},
the {\em dual Kogan vertex} $v^*$ can be defined by equations $B_{i,j}$ for all odd $i\le 2n-1$ and equations $A_{i,j}$ for all even $i<  2n-1$.
Again there are $n$ geometric mitosis operations $\M^{v^*}_1$,\ldots, $\M^{v^*}_n$ where $\M^{v^*}_i$ corresponds to the decomposition $GZ^C_\rho=\{x^1_i=\l_{i+1}\}\star \{x^1_i=\l_i\}$ (that is, $P_i$ and $Q_i$ from the definition of $\M^{v}_i$ switch places when defining $\M^{v^*}_i$).
In the case of $Sp_4(\C)$, these operations produce part of the presentation of Schubert cycles from \cite[Corollary 6.13]{F} as the following example shows.

\begin{example}\label{e.dualC} Let $n=2$.
We encode dual symplectic Kogan faces by their diagrams.
We put $A$ and $B$ instead of $+$ just for clarity (not because it carries any extra information).
This way it is easier to distinguish diagrams of Kogan faces from diagrams of dual Kogan faces.
In the second row, we use an adapted mitosis once (this adaptation is based on the chain $Sp_2(\C)=SL_2(\C)\subset Sp_4(\C)$).
\begin{figure}[h]
\centering
		\begin{tikzpicture}[x=1em,y=-1em]
            \draw (2,4) rectangle +(1,1);
            \draw (3,4) rectangle +(1,1);
            \draw (2.5,5) rectangle +(1,1);
            \draw (3,6) rectangle +(1,1);
            \draw (2.5,4.5) node{B};
            \draw (3.5,4.5) node{B};
            \draw (6,5) node{$\stackrel{\M^{v^*}_1}{\longrightarrow}$};
            \draw (3,7.5) node{${\rm id}$};
            \draw (3,5.5) node{A};
            \draw (3.5,6.5) node{B};
		\end{tikzpicture}
        \begin{tikzpicture}[x=1em,y=-1em]
            \draw (10,4) rectangle +(1,1);
            \draw (11,4) rectangle +(1,1);
            \draw (10.5,5) rectangle +(1,1);
            \draw (11,6) rectangle +(1,1);
            \draw (10.5,4.5) node{};
            \draw (11.5,4.5) node{B};
            \draw (14,5) node{$\stackrel{\M^{v^*}_2}{\longrightarrow}$};
            \draw (11,7.5) node{$s_2$};
            \draw (11,5.5) node{A};
            \draw (11.5,6.5) node{B};
		\end{tikzpicture}
        \begin{tikzpicture}[x=1em,y=-1em]
            \draw (18,4) rectangle +(1,1);
            \draw (19,4) rectangle +(1,1);
            \draw (18.5,5) rectangle +(1,1);
            \draw (19,6) rectangle +(1,1);
            \draw (18.5,4.5) node{B};
            \draw (19.5,4.5) node{};
            \draw (22,5) node{\&};
            \draw (19,7.5) node{$s_2s_1$};
            \draw (19,5.5) node{};
            \draw (19.5,6.5) node{B};
		\end{tikzpicture}
        \begin{tikzpicture}[x=1em,y=-1em]
            \draw (22,4) rectangle +(1,1);
            \draw (23,4) rectangle +(1,1);
            \draw (22.5,5) rectangle +(1,1);
            \draw (23,6) rectangle +(1,1);
            \draw (22.5,4.5) node{};
            \draw (23.5,4.5) node{};
            \draw (23,7.5) node{$s_2s_1$};
            \draw (26,5) node{$\stackrel{\M^{v^*}_1}{\longrightarrow}$};
            \draw (23,5.5) node{A};
            \draw (23.5,6.5) node{B};
		\end{tikzpicture}
        \begin{tikzpicture}[x=1em,y=-1em]
            \draw (30,4) rectangle +(1,1);
            \draw (31,4) rectangle +(1,1);
            \draw (30.5,5) rectangle +(1,1);
            \draw (31,6) rectangle +(1,1);
            \draw (30.5,4.5) node{};
            \draw (31.5,4.5) node{B};
            \draw (34,5) node{\&};
            \draw (31,7.5) node{$s_2s_1s_2$};
            \draw (31,5.5) node{};
            \draw (31.5,6.5) node{};
		\end{tikzpicture}
        \begin{tikzpicture}[x=1em,y=-1em]
            \draw (34,4) rectangle +(1,1);
            \draw (35,4) rectangle +(1,1);
            \draw (34.5,5) rectangle +(1,1);
            \draw (35,6) rectangle +(1,1);
            \draw (34.5,4.5) node{};
            \draw (35.5,4.5) node{};
            \draw (34,5) node{};
            \draw (35,5.5) node{};
            \draw (35,7.5) node{$s_2s_1s_2$};
            \draw (35.5,6.5) node{B};
		\end{tikzpicture}

\begin{tikzpicture}[x=1em,y=-1em]
            \draw (2,4) rectangle +(1,1);
            \draw (3,4) rectangle +(1,1);
            \draw (2.5,5) rectangle +(1,1);
            \draw (3,6) rectangle +(1,1);
            \draw (2.5,4.5) node{B};
            \draw (3.5,4.5) node{B};
            \draw (6,5) node{$\stackrel{\mM^{v^*}_2}{\longrightarrow}$};
            \draw (3,7.5) node{$\rm id$};
            \draw (3,5.5) node{A};
            \draw (3.5,6.5) node{B};
		\end{tikzpicture}
        \begin{tikzpicture}[x=1em,y=-1em]
            \draw (10,4) rectangle +(1,1);
            \draw (11,4) rectangle +(1,1);
            \draw (10.5,5) rectangle +(1,1);
            \draw (11,6) rectangle +(1,1);
            \draw (10.5,4.5) node{B};
            \draw (11.5,4.5) node{B};
            \draw (14,5) node{$\stackrel{\M^{v^*}_1}{\longrightarrow}$};
            \draw (11,7.5) node{$s_1$};
            \draw (11,5.5) node{A};
            \draw (11.5,6.5) node{};
		\end{tikzpicture}
        \begin{tikzpicture}[x=1em,y=-1em]
            \draw (18,4) rectangle +(1,1);
            \draw (19,4) rectangle +(1,1);
            \draw (18.5,5) rectangle +(1,1);
            \draw (19,6) rectangle +(1,1);
            \draw (18.5,4.5) node{};
            \draw (19.5,4.5) node{B};
            \draw (22,5) node{$\stackrel{\M^{v^*}_2}{\longrightarrow}$};
            \draw (19,7.5) node{$s_1s_2$};
            \draw (19,5.5) node{A};
            \draw (19.5,6.5) node{};
		\end{tikzpicture}
        \begin{tikzpicture}[x=1em,y=-1em]
            \draw (26,4) rectangle +(1,1);
            \draw (27,4) rectangle +(1,1);
            \draw (26.5,5) rectangle +(1,1);
            \draw (27,6) rectangle +(1,1);
            \draw (26.5,4.5) node{B};
            \draw (27.5,4.5) node{};
            \draw (30,5) node{\&};
            \draw (27,7.5) node{$s_1s_2s_1$};
            \draw (27,5.5) node{};
            \draw (27.5,6.5) node{};
		\end{tikzpicture}
        \begin{tikzpicture}[x=1em,y=-1em]
            \draw (32,4) rectangle +(1,1);
            \draw (33,4) rectangle +(1,1);
            \draw (32.5,5) rectangle +(1,1);
            \draw (33,6) rectangle +(1,1);
            \draw (32.5,4.5) node{};
            \draw (33.5,4.5) node{};
            \draw (32,5) node{};
            \draw (33,7.5) node{$s_1s_2s_1$};
            \draw (33,5.5) node{A};
            \draw (33.5,6.5) node{};
		\end{tikzpicture}
		\caption{Mitosis for the dual Kogan vertex in type $C_2$.}
		\label{f.bijection}
\end{figure}
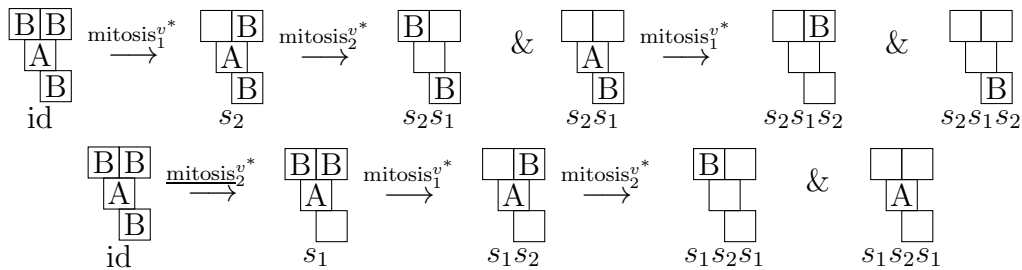
These collections of faces are the same as the presentations of Schubert cycles $[X_w]$ in \cite[Corollary 6.13]{F} for all $w\in W$ except for $s_2s_1$.
In the case $w=s_2s_1$, one face is missing.

Recall that according to \cite[Corollary 6.13]{F}, the dual Kogan faces that represent $[X_w]$ are in bijective correspondence with those reduced subwords of the longest word
$$\overline{w_0}=(s_1)(s_2s_1s_2)(s_3s_2s_1s_2s_3)\ldots(s_ns_{n-1}\ldots s_2s_1s_2\ldots s_{n-1}s_n)$$
that represent the element $w_0w$.
The bijection is obtained by inscribing the word $\overline{w_0}$ into the symplectic GZ pattern using reverse boustrophedon (see Figure \ref{f.dualKoganC} left).
For instance, the subword $$(s_1)(\cancel{s_2}\cancel{s_1}s_2)(s_3s_2s_1s_2\cancel{s_3})(\cancel{s_4}s_3
\cancel{s_2}\cancel{s_1}s_2s_3s_4)$$ in type $C_4$ corresponds to the dual Kogan face on Figure \ref{f.dualKoganC} right.

In particular, there are three dual Kogan faces that represent $[X_w]$ for $w=s_2s_1$ in type $C_2$.
Indeed, $w_0w=s_1s_2$, and there are three reduced subwords of $s_1(s_2s_1s_2)$ that represent $s_1s_2$.
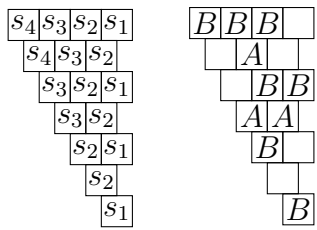
\begin{figure}[h]
		\centering
        \begin{tikzpicture}[x=1em,y=-1em]
			\draw (0,0) rectangle +(1,1);
			\draw (1,0) rectangle +(1,1);
			\draw (2,0) rectangle +(1,1);
			\draw (3,0) rectangle +(1,1);
			\draw (0.5,1) rectangle +(1,1);
			\draw (1.5,1) rectangle +(1,1);
			\draw (2.5,1) rectangle +(1,1);
			\draw (1,2) rectangle +(1,1);
			\draw (2,2) rectangle +(1,1);
            \draw (3,2) rectangle +(1,1);
			\draw (1.5,3) rectangle +(1,1);
            \draw (2.5,3) rectangle +(1,1);
           \draw (2,4) rectangle +(1,1);
            \draw (3,4) rectangle +(1,1);
            \draw (2.5,5) rectangle +(1,1);
            \draw (3,6) rectangle +(1,1);
			\draw (0.5,0.5) node{$s_4$};
			\draw (1.5,0.5) node{$s_3$};
			\draw (2.5,0.5) node{$s_2$};
			\draw (3.5,0.5) node{$s_1$};
            \draw (3,1.5) node{$s_2$};
			\draw (2,1.5) node{$s_3$};
			\draw (1,1.5) node{$s_4$};
			\draw (1.5,2.5) node{$s_3$};
         \draw (2.5,2.5) node{$s_2$};
          \draw (3.5,2.5) node{$s_1$};
           \draw (3,3.5) node{$s_2$};
           \draw (2,3.5) node{$s_3$};
            \draw (2.5,4.5) node{$s_2$};
            \draw (3.5,4.5) node{$s_1$};
            \draw (3,5.5) node{$s_2$};
           \draw (3.5,6.5) node{$s_1$};
		\end{tikzpicture}
\quad\begin{tikzpicture}[x=1em,y=-1em]
			\draw (0,0) rectangle +(1,1);
			\draw (1,0) rectangle +(1,1);
			\draw (2,0) rectangle +(1,1);
			\draw (3,0) rectangle +(1,1);
			\draw (0.5,1) rectangle +(1,1);
			\draw (1.5,1) rectangle +(1,1);
			\draw (2.5,1) rectangle +(1,1);
			\draw (1,2) rectangle +(1,1);
			\draw (2,2) rectangle +(1,1);
            \draw (3,2) rectangle +(1,1);
			\draw (1.5,3) rectangle +(1,1);
            \draw (2.5,3) rectangle +(1,1);
           \draw (2,4) rectangle +(1,1);
            \draw (3,4) rectangle +(1,1);
            \draw (2.5,5) rectangle +(1,1);
            \draw (3,6) rectangle +(1,1);
			\draw (0.5,0.5) node{$B$};
			\draw (1.5,0.5) node{$B$};
			\draw (2.5,0.5) node{$B$};
			\draw (3.5,0.5) node{};
            \draw (3,1.5) node{};
			\draw (2,1.5) node{$A$};
			\draw (1,1.5) node{};
			\draw (1.5,2.5) node{};
         \draw (2.5,2.5) node{$B$};
          \draw (3.5,2.5) node{$B$};
           \draw (3,3.5) node{$A$};
           \draw (2,3.5) node{$A$};
            \draw (2.5,4.5) node{$B$};
            \draw (3.5,4.5) node{};
            \draw (3,5.5) node{};
           \draw (3.5,6.5) node{$B$};
		\end{tikzpicture}
	\caption{Correspondence between subwords of $\overline{w_0}$ and dual Kogan faces in type $C_4$}
		\label{f.dualKoganC}
	\end{figure}
\end{example}
\begin{remark}
In type $A$, there is no combinatorial difference between presentation of Schubert cycles by Kogan faces and presentation by dual Kogan faces (see \cite[Theorem 5.25]{F}).
Both presentations can be related using the automorphism of the Dynkin diagram of type $A_n$ (on the level of GZ diagrams this automorphism corresponds to the reflection $(i,j)\mapsto (i, n+2-i-j)$).
In type $C$, there is a combinatorial difference already for $n=2$ (see \cite[Section 1]{F} for more details).
In type $A$, one can immediately recover mitosis on dual Kogan faces from  mitosis on Kogan faces.
Both operations will be combinatorially equivalent to the Knutson--Miller mitosis.
In type $C$, the Fujita mitosis on Kogan faces does not yield a combinatorially equivalent mitosis on dual Kogan faces.
\end{remark}
Example \ref{e.dualC} shows that Definition \ref{d.mitosis} is too simple to capture completely the action of $p^*p_*$ on faces of $\Delta$ in the general setting.
However, it may be regarded as the first approximation of this action.
It can also be used to make an educated guess about a possible combinatorial mitosis on dual Kogan faces in type $C$ and mitosis in type $D$.

\end{document}